\documentclass[12 pt,leqno]{article}

\usepackage{amsmath,url}
\usepackage{amssymb}
\usepackage[english]{babel}
\usepackage{amsthm}
\usepackage[latin1]{inputenc}
\usepackage{hyperref}
\usepackage{graphicx}
\usepackage{subfigure}
\usepackage{makeidx}

\newtheorem{teo}{Theorem}
\newtheorem{lemma}{Lemma}
\newtheorem{prop}{Proposition}

\newtheorem{cor}{Corollary}
\newtheorem{remark}{Remark}

\newenvironment{sistema}%
{\left\lbrace\begin{array}{@{}l@{}}}%
{\end{array}\right.}

{\left( \begin{array}{@{}l@{}}}%
{\end{array}\right)}

\DeclareMathOperator{\esssup}{ess\,sup}

\begin{document}

\title{\textbf{Coexistence of bounded and unbounded motions in a bouncing ball model}}
\author{\textbf{Stefano Marò} \\
\textit{\small{Dipartimento di Matematica - Università di Torino}}\\
\textit{\small{Via Carlo Alberto 10, 10123 Torino - Italy}}\\
\textit{\small{e-mail: stefano.maro@unito.it}}
}
\date{}

\maketitle

\begin{abstract}
We consider the model describing the vertical motion of a ball falling with constant acceleration on a wall and elastically reflected. The wall is supposed to move in the vertical direction according to a given periodic function $f$. We apply the Aubry-Mather theory to the generating function in order to prove the existence of bounded motions with prescribed mean time between the bounces. As the existence of unbounded motions is known, it is possible to find a class of functions $f$ that allow both bounded and unbounded motions.  
\end{abstract}

\section{Introduction}  

We consider the model of a ball bouncing on an infinitely heavy racket that is moving in the vertical direction according to a given regular periodic function $f(t)$. Moreover, we suppose that the gravity force is acting on the ball.

\begin{figure}[h]
\begin{center}
\includegraphics[scale=1.0]{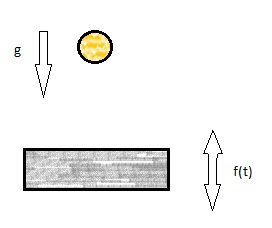}
\end{center}
 \end{figure}
 
This model can be formulated in terms of continuous or discrete dynamics. Actually, we can follow the continuous motion of the ball or just look at the sequence of impact times and velocities $(t_n,w_n)$ produced by successive impacts. The first approach leads to a differential equation and the second to a map $S(t_n,w_n)=(t_{n+1},w_{n+1})$ in the plane. In this context an unbounded motion has to be understood in the sense of the possibility of speeding up the ball, i.e. finding an orbit $(t_n^*,w_n^*)$ of $S$ such that $\lim_{n\to +\infty}w_n^*=+\infty$ or, equivalently $\lim_{n\to +\infty}(t_{n+1}^*-t_n^*)=+\infty$.
 
\smallskip 
\noindent The existence of unbounded motions was first proved by Pustyl'nikov \cite{pust}. Under some assumptions on $f(t)$, he used the discrete description to find an orbit $(t_n^*,w_n^*)$ corresponding to a motion in which every impact occurs when the racket is moving upwards. Moreover, he was also able to prove that the stable manifold around this orbit turns to be composed by unbounded orbits. More recently the model has been studied in \cite{ruiztorres}. We also mention the paper by Dolgopyat \cite{dolgo} dealing with non-gravitational forces and the paper by Kunze and Ortega \cite{kunzeortega2} concerning a non-periodic function $f(t)$. 

\smallskip
The aim of this paper is to prove that bounded motions with remarkable qualitative characteristics are possible as well. Indeed we will prove that for every real and sufficiently big number $\alpha$ there exists an invariant set of orbits with bounded velocity. In each of these invariant sets, there exists an orbit corresponding to a motion with mean time between the bounces coinciding with $\alpha$. Moreover, motions with the same mean time between the bounces can be ordered. If $\alpha=p/q$ is rational, the invariant set contains periodic points of period $q$ that correspond to $p$-periodic motions with $q$ bounces in a period. Between two consecutive periodic points there is an heteroclinic orbit. If $\alpha$ is irrational, then we have an alternative: either an invariant curve corresponding to quasi-periodic motions in the classical sense with frequencies the period of $f(t)$ and $\alpha$ or the invariant set is a Cantor. In the last case the motion is not quasi-periodic in the classical sense but displays a dynamics of Denjoy-type \cite{mathertop}. We stress the fact that both the alternatives are possible. Indeed the unbounded orbit breaks the invariant curves, and if $\dot{f}$ differs little from zero then all the motions of the ball are bounded and invariant curves appears \cite{pustsoviet}. It is worth mentioning that these two facts occur for different functions $f$. 

\smallskip
To prove the result we use the classical theory of Aubry-Mather \cite{aubry,matherams}. Precisely, after replacing the velocity by the energy, the map $(t_n,E_n)\mapsto(t_{n+1},E_{n+1})$ becomes symplectic and has an associated variational principle. This means that the sequence of successive impact times $(t_n)$ satisfies the second order difference equation
$$
\partial_2h(t_{n-1},t_n)+\partial_1h(t_n,t_{n+1})=0
$$
where $h=h(t_0,t_1)$ is the so-called generating function. A nice feature of this model is that the function $h$ can be computed explicitly. This was done in \cite{kunzeortega2} and we will employ it. Section 2 is dedicated to a brief discussion on the formulation of the model and the variational principle. In Section 3 we prove that the standard Aubry-Mather theory can be applied to our model. This will allow to solve the previous difference equation, whose solutions will give the motions described above. In fact there are some subtleties involved, because the general theory asks for generating function defined in the whole plane $(t_0,t_1)$, while our function $h$ is only defined in an half-plane $t_1-t_0\geq k$. Then we have to extend $h$ to the whole plane preserving the condition $\partial_{t_0t_1}h\leq\epsilon< 0$, that is crucial in Aubry-Mather theory. To achieve this we use a trick based on the D'Alambert formula for the wave equation
$$
\frac{\partial^2h}{\partial t_0\partial t_1}=p(t_0,t_1).
$$
The use of the Aubry-Mather theory gives more informations on the orbits that are summed up in Corollary \ref{cr}.

\smallskip
Finally, we remark that, joining the result of Pustil'nikov on unbounded orbit with ours, it is possible to get a class of motions of the racket that allow both bounded and unbounded motions of the ball.

%

\section{Statement of the problem}
Consider the model of a free falling ball with mass $1$ under the gravity force $g$ and let $z(t)$ be its vertical position. The plate is supposed to move according to a $C^4(\mathbb{R})$  $1$-periodic function $f(t)\leq z(t)$. We can consider a system of reference joined with the plate performing the change of variable $x(t)=z(t)-f(t)$. At an instant $\tau$ of impact, the change of velocity is assumed to be elastic. So we will consider the problem    
\begin{equation}\label{prob}
\begin{sistema}
\ddot{x}=-(g+\ddot{f}(t))\\
x(t)\geq0\\
x(\tau)=0\Rightarrow\dot{x}(\tau^+)=-\dot{x}(\tau^-)
\end{sistema}
\end{equation}
As in \cite{kunzeortega2}, by a solution we understand a function $y\in C(\mathbb{R})$ and a sequence $(t^*_n)$ of impact times such that
\begin{enumerate}
\item $\inf_n(t^*_{n+1}-t^*_n)>0$
\item $y(t^*_n)=0$ for every $n$ and $y(t)>0$ for $t\in (t^*_n,t^*_{n+1})$
\item the function $y$ is of class $C^2$ on every interval $[t^*_n,t^*_{n+1}]$ and satisfies the linear differential equation on this interval.
\item $\dot{y}(t_n^+)=-\dot{y}(t_n^-)$
\end{enumerate}
Moreover, the solution is called bounded if it also satisfies
\begin{enumerate}
\item[5.] $\sup_n (t_{n+1}-t_n)<\infty$.
\end{enumerate}
Notice that in such a case we have
$$
\sup_{t\in\mathbb{R}}|y(t)|+\esssup_{t\in\mathbb{R}}|\dot{y}(t)|<\infty.
$$
The problem can be formulated in a discrete form. We can solve the initial value problem
\begin{equation}
\begin{sistema}
\ddot{x}=-(g+\ddot{f}(t))\\
x(t_{n-1})=0,\quad \dot{x}(t_{n-1})=w_{n-1}
\end{sistema}
\end{equation}
and impose the conditions
$$
x(t_{n})=0,\quad \dot{x}(t_{n})=-w_{n}
$$
to obtain
$$
t_n=t_{n-1}+\frac{2}{g}w_{n-1}-\frac{2}{g}f[t_n,t_{n-1}]+\frac{2}{g}\dot{f}(t_{n-1})
$$
and
$$
-w_n=w_{n-1}-g(t_n-t_{n-1})-\dot{f}(t_n)+\dot{f}(t_{n-1}).
$$
where
$$
f[t_n,t_{n-1}]=\frac{f(t_n)-f(t_{n-1})}{t_n-t_{n-1}}.
$$
Substituting the first in the second we get the formulas
\begin{equation}
\begin{sistema}
t_n=t_{n-1}+\frac{2}{g}w_{n-1}-\frac{2}{g}f[t_n,t_{n-1}]+\frac{2}{g}\dot{f}(t_{n-1})
\\
w_n=w_{n-1}-2f[t_n,t_{n-1}]+\dot{f}(t_n)+\dot{f}(t_{n-1}).
\end{sistema}
\end{equation}
Inspired by such formulas we can consider the following map $S(t_0,w_0)=(t_1,w_1)$ defined by
\begin{equation}\label{mapp}
\begin{sistema}
t_1=t_{0}+\frac{2}{g}w_{0}-\frac{2}{g}f[t_1,t_{0}]+\frac{2}{g}\dot{f}(t_{0})
\\
w_1=w_{0}-2f[t_1,t_{0}]+\dot{f}(t_1)+\dot{f}(t_{0}).
\end{sistema}
\end{equation}
Notice that this is an implicit definition but we have
\begin{lemma}\label{bdef}
There exists $\bar{w}>0$, depending on $||\dot{f}||_{\infty}$, such that if $w_0>\bar{w}$ and for $t_0\in\mathbb{R}$ the map $S(t_0,w_0)=(t_1,w_1)$ is well defined and $C^3$.  
\end{lemma}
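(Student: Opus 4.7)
The plan is to reduce Lemma~\ref{bdef} to an implicit function argument applied to the first equation of \eqref{mapp}: once $t_1$ is known, the second equation defines $w_1$ explicitly and inherits the regularity. Accordingly I set
$$
F(t_1;t_0,w_0) \;=\; (t_1-t_0) - \frac{2}{g}w_0 + \frac{2}{g}f[t_1,t_0] - \frac{2}{g}\dot f(t_0)
$$
on the half-plane $\{t_1>t_0\}$ and look for a root $t_1$ close to $t_0+\frac{2w_0}{g}$.

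For existence I use the mean value theorem to get $|f[t_1,t_0]|\leq\|\dot f\|_\infty$, so the ``error'' terms $\frac{2}{g}f[t_1,t_0]-\frac{2}{g}\dot f(t_0)$ are jointly bounded by $\frac{4}{g}\|\dot f\|_\infty$. Writing $t_1=t_0+\frac{2w_0}{g}+s$ makes $F$ equal to $s+R(s;t_0,w_0)$ with $|R|\leq\frac{4}{g}\|\dot f\|_\infty$. Taking $w_0$ large enough that the interval $s\in\bigl[-\frac{5}{g}\|\dot f\|_\infty,\,\frac{5}{g}\|\dot f\|_\infty\bigr]$ sits inside $\{t_1>t_0\}$, the intermediate value theorem yields a root.

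For uniqueness and smoothness I differentiate:
$$
\partial_{t_1}F \;=\; 1 + \frac{2}{g}\,\frac{\dot f(t_1)-f[t_1,t_0]}{t_1-t_0},
$$
and since both $|\dot f(t_1)|$ and $|f[t_1,t_0]|$ are bounded by $\|\dot f\|_\infty$, the perturbation is $O\bigl(1/(t_1-t_0)\bigr)$. Choosing $\bar w$ so that any candidate $t_1$ satisfies $t_1-t_0\geq 16\|\dot f\|_\infty/g$ forces $\partial_{t_1}F\geq 1/2$; this rules out a second root in the relevant region and allows the implicit function theorem to produce a map $(t_0,w_0)\mapsto t_1$ with the same regularity as $F$. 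Since $f\in C^4(\mathbb{R})$, the function $F$ is of class $C^3$ wherever $t_1\ne t_0$, so the resulting $t_1$ is $C^3$ in $(t_0,w_0)$, and substituting into the second equation of \eqref{mapp} gives that $w_1$ is $C^3$ as well.

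The only real obstacle is controlling the divided difference $f[t_1,t_0]$ and its $t_1$-derivative; once one observes that both are dominated by $\|\dot f\|_\infty$ multiplied by appropriate powers of $1/(t_1-t_0)$, the rest is standard. The lower bound on $t_1-t_0$, which grows linearly with $w_0$, is precisely what forces the threshold $\bar w$ to depend on $\|\dot f\|_\infty$, as in the statement.
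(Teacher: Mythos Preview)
Your argument is correct and follows essentially the same route as the paper: define the same implicit function $F$, observe that $\partial_{t_1}F=1+O\bigl(1/(t_1-t_0)\bigr)$ (your expression $\frac{\dot f(t_1)-f[t_1,t_0]}{t_1-t_0}$ is exactly the paper's $\frac{\dot f(t_1)(t_1-t_0)-f(t_1)+f(t_0)}{(t_1-t_0)^2}$), apply the implicit function theorem for large $w_0$, and read off $w_1$ from the second equation. Your version is slightly more explicit in tracking the constants and in invoking the intermediate value theorem for existence, but the strategy is identical.
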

\begin{proof}
First of all notice that, since $f$ is $C^4$ and periodic
$$
t_1-t_0=\frac{2}{g}w_0+O(1)
$$
so that if $w_0\to\infty$ then $t_1-t_0\to\infty$. Now, considering the function
$$
F(t_0,t_1,w_0)=t_1-t_{0}-\frac{2}{g}w_{0}+\frac{2}{g}f[t_1,t_{0}]-\frac{2}{g}\dot{f}(t_0)
$$
we have
$$
\partial_{t_1}F(t_0,t_1,w_0)=1+\frac{2}{g}\frac{\dot{f}(t_1)(t_1-t_0)-f(t_1)+f(t_0)}{(t_1-t_0)^2}
$$
that is strictly positive for $t_1-t_0\to\infty$. So, taking $w_0$ sufficiently big, we have that for every $t_0$ we have a unique $t_1=t_1(t_0,w_0)>t_0+1$ that solves the first equation in (\ref{mapp}). Moreover applying the implicit function theorem we have, by uniqueness, that $t_1(t_0,w_0)$ is a $C^4$ function. Substituting in the second we have the thesis.
\end{proof}

It has been showed in \cite{kunzeortega2} that a good strategy to face this problem is to take a sequence $(t^*_n)$ of impact time such that $\inf_n(t^*_{n+1}-t^*_n)$ were sufficiently big in order to have a positive solution of the corresponding Dirichlet problem 
\begin{equation}
\begin{sistema}
\ddot{x}=-(g+\ddot{f}(t))\\
x(t_{n+1}^*)=x(t_n^*)=0.
\end{sistema}
\end{equation}
 Then we have to glue such solutions in a way that the elastic bounce condition holds. To this aim we have to pass to the discrete version of the problem, given by the map $S(t_0,w_0)\mapsto(t_1,w_1)$ coming from lemma \ref{bdef}. This map is not exact symplectic but $S(t_0,E_0)\mapsto(t_1,E_1)$ where $E_0:=\frac{1}{2}w_0^2$ is exact symplectic. The coordinates $(t_n,E_n)$ are conjugate, so the map can be expressed in terms of a generating function $h(t_0,t_1)$ such that
\begin{equation}\label{genfunz}
\begin{sistema}
\partial_1h(t_0,t_1)=-E_0 \\
\partial_2h(t_0,t_1)=E_1.
\end{sistema}
\end{equation}
and that can be explicitly computed giving 
\begin{equation}\label{genfun}
\begin{split}
h(t_0,t_1)=& \frac{g^2}{24}(t_1-t_0)^3+\frac{g}{2}(f(t_1)+f(t_0))(t_1-t_0)-\frac{(f(t_1)-f(t_0))^2}{2(t_1-t_0)}\\
           & -g\int^{t_1}_{t_0}f(t)dt+\frac{1}{2}\int^{t_1}_{t_0}\dot{f}^2(t)dt.
\end{split}
\end{equation}   
So the good sequence $(t^*_n)$ giving the elastic bounce condition turns to be one such that
$$
\partial_{t_1}h(t^*_{n-1},t^*_n)+\partial_{t_0}h(t^*_n,t^*_{n+1})=0.
$$
See \cite{kunzeortega2} for more details. Moreover, we can introduce an order relation between two different bouncing solution $x_1(t)$ and $x_2(t)$, saying that $x_1(t)\prec x_2(t)$ if and only if, called $(\tau_i^1)_i$ and $(\tau_i^2)_i$, the corresponding sequences of impact times, we have $\tau_i^1\leq \tau_i^2$ for every $i$.\\

\section{Existence of Aubry-Mather sets}
In this section we will prove our main result, say
\begin{teo}\label{main}
Given a $1$-periodic function $f\in C^4(\mathbb{R})$, there exists $\alpha_*$ such that for every $\alpha>\alpha_*$ there exists an orbit $(t_n^*,w_n^*)$ such that
\begin{equation}\label{condrota}
\lim_{n\to\infty}\frac{1}{n}\sum_{k=0}^{n-1}(t_{k+1}^*-t_k^*)=\alpha.
\end{equation}
\end{teo}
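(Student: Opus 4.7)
The plan is to apply the classical Aubry--Mather theorem to the generating function $h$ given by (\ref{genfun}). The main obstacle, already flagged in the introduction, is that the standard theory requires a generating function defined on all of $\mathbb{R}^2$ satisfying the (negative) twist condition $\partial_{t_0 t_1}h \leq -\epsilon < 0$ everywhere, while our $h$ has physical meaning only on the half--plane $\{t_1 - t_0 \geq k\}$ determined by Lemma \ref{bdef}. I therefore split the work into three blocks: verifying the structural hypotheses of Aubry--Mather on the physical region, extending $h$ to the whole plane preserving those hypotheses, and returning from Aubry--Mather minimizers to genuine bouncing orbits.

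First I would verify the required properties of $h$ in its physical domain. The $1$--periodicity $h(t_0+1,t_1+1) = h(t_0,t_1)$ is immediate from the periodicity of $f$ and the translation invariance of the integrals in (\ref{genfun}). A direct differentiation gives
$$
\partial_{t_0 t_1} h(t_0,t_1) = -\frac{g^2}{4}(t_1-t_0) + O(1) \quad \text{as } t_1 - t_0 \to \infty,
$$
so negative twist holds uniformly for $t_1 - t_0 \geq K$ provided $K$ is large enough (in particular $K \geq k$). Superlinearity in the difference is also clear from the cubic leading term.

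Second, I would extend $h$ to a function $\tilde h \in C^2(\mathbb{R}^2)$ that is $(1,1)$--periodic, coincides with $h$ on $\{t_1 - t_0 \geq K+1\}$, and satisfies $\partial_{t_0 t_1}\tilde h \leq -\epsilon < 0$ globally. Following the D'Alembert device mentioned in the introduction, I set $p_0(t_0,t_1) := \partial_{t_0 t_1}h(t_0,t_1)$ on the physical half--plane, extend it to a smooth $(1,1)$--periodic function $p$ on $\mathbb{R}^2$ with $p \leq -\epsilon < 0$ everywhere (an interpolation in the direction $t_1 - t_0$ between $p_0$ and a suitable negative constant), and reconstruct
$$
\tilde h(t_0,t_1) = \int_{t_0}^{t_1}\!\!\int_{t_0}^{s} p(\sigma,s)\,d\sigma\,ds + a(t_0) + b(t_1),
$$
choosing $a,b \in C^2(\mathbb{R})$ so that $\tilde h$ matches $h$ (up to an irrelevant null Lagrangian $A(t_0)+B(t_1)$, which does not affect the Euler--Lagrange equation) on $\{t_1 - t_0 \geq K+1\}$ and so that $\tilde h$ is $(1,1)$--periodic. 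Matching reduces to a linear one--dimensional problem for $a,b$ whose compatibility with periodicity is ensured by the periodicity of $p$.

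Third, I would invoke the classical Aubry--Mather theorem for $\tilde h$: for every $\alpha \in \mathbb{R}$ there exists a minimal configuration $(t_n^*)_{n \in \mathbb{Z}}$ with rotation number $\alpha$, satisfying
$$
\partial_2 \tilde h(t_{n-1}^*,t_n^*) + \partial_1 \tilde h(t_n^*,t_{n+1}^*) = 0
$$
together with the Birkhoff property $|t_n^* - t_0^* - n\alpha| \leq 1$ for all $n$. In particular $\alpha - 2 \leq t_{n+1}^* - t_n^* \leq \alpha + 2$. Choosing $\alpha_* := K+2$, for every $\alpha > \alpha_*$ the whole orbit lies in the region where $\tilde h = h$ (modulo the null Lagrangian), so the Euler--Lagrange equation is the true one for $h$ and, by (\ref{genfunz}), produces a genuine orbit $(t_n^*, w_n^*)$ of the map $S$, with $w_n^* = \sqrt{-2\,\partial_1 h(t_n^*,t_{n+1}^*)}$. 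The Ces\`aro limit (\ref{condrota}) is exactly the rotation number of the Aubry--Mather minimizer.

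The genuine difficulty is the extension step: building $\tilde h$ that is simultaneously $C^2$, $(1,1)$--periodic, globally strictly twisted, and equal to $h$ where it matters. Interpolating at the level of the mixed partial $p$ rather than $h$ itself is precisely what makes the sign constraint on the twist a convex constraint on $p$, which is why the D'Alembert reformulation works; the bookkeeping of the integration constants $a,b$ to restore periodicity is then a routine but somewhat delicate task.
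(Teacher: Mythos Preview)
Your three-block strategy coincides with the paper's: verify (H2) and the twist on the physical half-plane (Proposition~\ref{quasigenf}), extend $h$ to a global $\tilde h$ satisfying (H1)--(H3) (Proposition~\ref{modif}), then run Aubry--Mather and use the Birkhoff bound $|t_n^*-t_0^*-n\alpha|<1$ to force $t_{n+1}^*-t_n^*>k$ for $\alpha$ large, so the minimizer sits entirely in the region where $\tilde h=h$ and hence is a genuine orbit of $S$.

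There is, however, one real slip in your extension step. You assert that a residual difference $A(t_0)+B(t_1)$ between $\tilde h$ and $h$ is an ``irrelevant null Lagrangian which does not affect the Euler--Lagrange equation''. That is false for the discrete action $\sum_n h(t_n,t_{n+1})$: the genuine null Lagrangians are the telescoping ones $\phi(t_1)-\phi(t_0)$. A general $A(t_0)+B(t_1)$ shifts the stationarity condition to
\[
\partial_2 h(t_{n-1},t_n)+\partial_1 h(t_n,t_{n+1}) = -\bigl(A'(t_n)+B'(t_n)\bigr),
\]
which is not the equation (\ref{genfunz}) defining $S$ unless $A'+B'\equiv 0$. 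So you cannot leave this discrepancy hanging; you must choose $a,b$ so that $\tilde h=h$ \emph{exactly} on $\{t_1-t_0\ge K+1\}$. This is in fact possible: on that half-plane $h-G$ (with $G$ your double integral) has vanishing mixed partial, hence equals $A(t_0)+B(t_1)$ for globally defined $C^2$ functions $A,B$, and diagonal periodicity of $h$ and $G$ forces $A(t_0+1)+B(t_1+1)=A(t_0)+B(t_1)$; taking $a=A$, $b=B$ then yields a $(1,1)$-periodic $\tilde h$ agreeing with $h$ on the nose.

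The paper sidesteps this bookkeeping altogether by posing the reconstruction of $\tilde h$ as a Cauchy problem for $u_{t_0t_1}=\tilde d$ with data $u=h$ and $u_{t_1}-u_{t_0}=h_{t_1}-h_{t_0}$ prescribed on the characteristic line $t_1=t_0+k$; exact equality $\tilde h=h$ for $t_1-t_0>k$ then follows from uniqueness, and periodicity from periodicity of the data. This is cleaner precisely because it fixes both ``constants of integration'' simultaneously. The paper also records the technical input that makes the extended mixed partial $C^1$ (needed for $\tilde h\in C^2$): boundedness of $\partial_{t_0t_0t_1}h$ and $\partial_{t_0t_1t_1}h$. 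Your interpolation in the $t_1-t_0$ direction implicitly needs the same regularity, so it is worth stating this hypothesis explicitly.
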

Our strategy of proving theorem \ref{main} relies on the classical Aubry-Mather theory for twist diffeomorphisms. To apply this theory the following hypothesis on the generating function are needed:
\begin{itemize}
\item[(H1)] $h\in C^2(\mathbb{R}^2)$
\item[(H2)] $h(t_0+1,t_1+1)=h(t_0,t_1)$ for all $(t_0,t_1)\in\mathbb{R}^2$
\item[(H3)] $\partial_{t_0t_1}h\leq\epsilon<0$ for all $(t_1,t_0)\in\mathbb{R}^2$
\end{itemize}
and we will refer to as hypothesis (H). In this case we have monotone increasing configurations $t=(t^*_n)$ that minimize the action of an exact symplectic twist diffeomorphism $S(t_0,t_1)$ and characterized by a rotation number defined as
$$
\alpha(t)=\lim_{n\to\infty}\frac{1}{n}\sum_{k=0}^{n-1}(t_{k+1}^*-t_k^*).
$$
To these configurations correspond orbits ${(t^*_n,E^*_n)}$ for the diffeomorphism that are contained in a compact invariant set called Aubry-Mather set. Precisely we have
\begin{teo}[\cite{bangert}]\label{mat}
Suppose that the generating function $h(t_0,t_1)$ of a diffeomorphisms $S(t_0,E_0)$ satisfies hypothesis (H). Then, for every $\alpha\in\mathbb{R}$ there exists an Aubry-Mather set whose orbits have rotation number $\alpha(t)=\alpha$. 
\end{teo}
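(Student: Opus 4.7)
The plan is to follow the classical Aubry--Mather variational construction (in the form presented by Bangert in \cite{bangert}). For each $\alpha\in\mathbb{R}$ one produces a monotone bi-infinite configuration $t=(t_n)_{n\in\mathbb{Z}}$ that is \emph{minimal} for the formal action $W(t)=\sum_n h(t_n,t_{n+1})$, meaning that every finite block $(t_j,\ldots,t_k)$ minimizes $\sum_{i=j}^{k-1}h(t_i,t_{i+1})$ among all sequences with the same endpoints, and whose rotation number is $\alpha$. Minimality on every block, combined with (H1) and (H3), forces the Euler--Lagrange relation $\partial_2h(t_{n-1},t_n)+\partial_1h(t_n,t_{n+1})=0$, so such a sequence is an orbit of the twist diffeomorphism $S$; the closure of this orbit in the cylinder is then the Aubry--Mather set whose existence we must establish.

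I would treat the rational case $\alpha=p/q$ first, by direct minimization over the finite-dimensional set $X_{p,q}$ of sequences satisfying $t_{n+q}=t_n+p$. By (H2) the restricted action $W_{p,q}(t)=\sum_{k=0}^{q-1}h(t_k,t_{k+1})$ is invariant under the diagonal shift $t\mapsto t+1$, so one minimizes on the compact quotient $X_{p,q}/\mathbb{Z}$, once the standard coercivity of $h$ in $|t_1-t_0|$ (implicit in the notion of twist generating function for a diffeomorphism) is in hand. The twist (H3) makes $W_{p,q}$ strictly convex in each inner coordinate, so the infimum is attained in the interior of the monotone cone and the minimizer satisfies the Euler--Lagrange equation above, producing the desired periodic orbit.

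Next I would prove Aubry's crossing lemma: two distinct minimal configurations $(s_n)$ and $(t_n)$ cross at most once. The proof is the standard exchange argument: if they crossed twice, the admissible competitors $(\min(s_n,t_n))$ and $(\max(s_n,t_n))$ would together have action strictly smaller than that of $s$ and $t$, because of the twist identity
$$h(a,c)+h(b,d)-h(a,d)-h(b,c)=\int_a^b\!\!\int_c^d\partial_{t_0t_1}h\,dt_0\,dt_1\leq -\epsilon(b-a)(d-c)<0$$
valid for $a<b$, $c<d$ by (H3), contradicting minimality of $s,t$. As a corollary the orbits obtained in the rational case are totally ordered on the cylinder and share rotation number $p/q$.

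For irrational $\alpha$, I would approximate by $p_n/q_n\to\alpha$ and take minimizers $t^{(n)}$ of $W_{p_n,q_n}$ normalized so that $t^{(n)}_0\in[0,1)$. The crossing lemma combined with the prescribed rotation number yields a uniform Birkhoff-type estimate $|t^{(n)}_k-k p_n/q_n|\leq C$, so a diagonal subsequence converges pointwise to a monotone configuration $t^*$; pointwise limits of minimal configurations are minimal because minimality is a property of every finite block, and the rotation number of $t^*$ is $\alpha$ by construction. The main obstacle is the crossing lemma itself: it is the sole place where the strict twist $\epsilon<0$ enters in an essentially combinatorial way, and it is exactly this rigidity that simultaneously produces the interior minimizer in the rational case, totally orders the resulting orbits, and provides the compactness needed to pass to the limit in the irrational case.
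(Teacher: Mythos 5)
The paper itself gives no proof of Theorem \ref{mat}: it is imported verbatim from Bangert's survey \cite{bangert}, so your sketch has to be measured against that classical argument rather than against anything in the text. What you outline is indeed the standard construction of that reference, and it is essentially sound: periodic minimizers of the truncated action for $\alpha=p/q$, Aubry's crossing lemma via the exchange inequality $h(a,c)+h(b,d)-h(a,d)-h(b,c)=\int_a^b\int_c^d\partial_{t_0t_1}h\le-\epsilon(b-a)(d-c)$, and a compactness argument with the Birkhoff estimate $|t_k-k\alpha|\le C$ to pass to irrational $\alpha$. Two points need tightening. First, the coercivity you call ``implicit in the notion of twist generating function'' is not a separate hypothesis here, but it does follow from (H2) and (H3) and deserves a line: by periodicity $\partial_2h(t_0,s+1)-\partial_2h(t_0,s)=\partial_2h(t_0,s+1)-\partial_2h(t_0+1,s+1)=-\int_{t_0}^{t_0+1}\partial_{t_0t_1}h(u,s+1)\,du\ge\epsilon$, so $\partial_2h$ grows linearly and $h(t_0,t_1)\to\infty$ quadratically as $|t_1-t_0|\to\infty$, uniformly in $t_0$. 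Second, and more seriously as stated, the claim that (H3) makes $W_{p,q}$ strictly convex in each inner coordinate is false: $\partial^2W_{p,q}/\partial t_k^2=\partial_{22}h(t_{k-1},t_k)+\partial_{11}h(t_k,t_{k+1})$, which the twist condition does not control at all. In Bangert's treatment the periodic minimizer is obtained by free minimization over all $(p,q)$-periodic configurations on the compact quotient (no monotone-cone constraint), the Euler--Lagrange equation holds because the minimum is unconstrained, and the monotonicity and total ordering of minimal configurations are consequences of the crossing lemma (the Birkhoff property), not of any convexity. With that justification replaced, your argument is exactly the cited one.
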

%
In our case, the generating function (\ref{genfun}) does not satisfies the whole hypothesis (H), but we have
\begin{prop}\label{quasigenf}
The generating function (\ref{genfun}) belongs to $C^3(\mathbb{R}^2)$, satisfies (H2) and $\partial_{t_0t_1}h\leq\epsilon<0$ for $t_1-t_0$ sufficiently large.
\end{prop}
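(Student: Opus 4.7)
The plan is to verify each of the three claims (regularity, periodicity, twist sign at infinity) by a term-by-term inspection of \eqref{genfun}. Only one subterm, the divided-difference $\frac{(f(t_1)-f(t_0))^2}{2(t_1-t_0)}$, is not visibly smooth on $\mathbb{R}^2$, and only one subterm, the cubic $\frac{g^2}{24}(t_1-t_0)^3$, contributes to the asymptotic sign of $\partial_{t_0 t_1}h$. Everything else is either smooth, bounded, or both.

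For the $C^3(\mathbb{R}^2)$ regularity, the integral terms $\int_{t_0}^{t_1} f\,dt$ and $\int_{t_0}^{t_1}\dot{f}^2\,dt$ are $C^4$ and $C^3$ respectively, by the fundamental theorem of calculus and the $C^4$ hypothesis on $f$. The cubic and the product $(f(t_1)+f(t_0))(t_1-t_0)$ are obviously $C^4$. The delicate term I rewrite as
\begin{equation*}
\frac{(f(t_1)-f(t_0))^2}{2(t_1-t_0)} \;=\; \tfrac{1}{2}\bigl(f(t_1)-f(t_0)\bigr)\,D(t_0,t_1),\qquad D(t_0,t_1):=\int_0^1 \dot{f}\bigl(t_0+\theta(t_1-t_0)\bigr)\,d\theta,
\end{equation*}
so that $D$ is defined on all of $\mathbb{R}^2$ and, by differentiation under the integral sign using $f\in C^4$, is $C^3$. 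The product with the $C^4$ factor $f(t_1)-f(t_0)$ is then $C^3$.

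Periodicity (H2) is an immediate check: the polynomial-in-$(t_1-t_0)$ factors are invariant under simultaneous translation of $(t_0,t_1)$; the occurrences of $f$ and $\dot{f}$ at the endpoints are invariant by $1$-periodicity of $f$; and for the two integral terms one applies the change of variable $t\mapsto t+1$ to $\int_{t_0+1}^{t_1+1}$.

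For the twist sign, the plan is to compute $\partial_{t_0 t_1}h$ term by term and isolate the unbounded contribution. The cubic yields exactly $-\frac{g^2}{4}(t_1-t_0)$. The second term contributes $\frac{g}{2}(\dot{f}(t_0)-\dot{f}(t_1))$, which is bounded. The divided-difference term produces expressions of the form $\dot{f}(t_j)\dot{f}(t_k)/(t_1-t_0)$ and $(f(t_1)-f(t_0))^k/(t_1-t_0)^{k+1}$ with $k=1,2$, all of which are $O\bigl(1/(t_1-t_0)\bigr)$ once $t_1-t_0\geq 1$ since $f,\dot{f}$ are bounded by periodicity. The two integral terms give $0$ after mixed differentiation because $\partial_{t_1}$ depends only on $t_1$ and $\partial_{t_0}$ only on $t_0$. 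Combining,
\begin{equation*}
\partial_{t_0 t_1}h(t_0,t_1) \;=\; -\tfrac{g^2}{4}(t_1-t_0)+R(t_0,t_1),\qquad \sup_{t_1-t_0\geq 1}|R|\leq C\bigl(\|f\|_{C^1}\bigr),
\end{equation*}
so choosing $K>0$ large enough with $\frac{g^2}{4}K\geq C+1$ yields $\partial_{t_0 t_1}h\leq -1$ on $\{t_1-t_0\geq K\}$. The main obstacle is the regularity step, and specifically the third term; once the divided-difference trick removes the spurious singularity on the diagonal, the twist computation is routine bookkeeping.
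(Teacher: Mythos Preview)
Your argument is correct and follows essentially the same route as the paper: the periodicity is immediate, the only nontrivial regularity issue is the divided-difference term, which you handle (as the paper does) via the integral representation $\frac{f(t_1)-f(t_0)}{t_1-t_0}=\int_0^1\dot f(t_0+\theta(t_1-t_0))\,d\theta$, and the twist estimate is obtained by observing that the cubic contributes $-\tfrac{g^2}{4}(t_1-t_0)$ while all remaining terms give a bounded mixed derivative. Your write-up is in fact more explicit than the paper's, which simply records the identity and the asymptotic $\partial_{t_0t_1}h=-\tfrac{g^2}{4}(t_1-t_0)+O(1)$.
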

\begin{proof}
First of all notice that (H2) follows directly by the periodicity properties of $f$. The regularity comes from the fact that 
$$
\frac{(f(t_1)-f(t_0))^2}{(t_1-t_0)}=[\int ^1_0 \dot{f}(\lambda t_1+(1-\lambda)t_0)d\lambda ]^2(t_1-t_0).
$$
Finally, a direct calculus of the second derivative of $h$ gives
$$
\partial_{t_1t_0}h=-\frac{g^2}{4}(t_1-t_0)+O(1)\quad\mbox{as } t_1-t_0\to\infty
$$
from which we conclude. 
\end{proof}


The fact that our generating function satisfies almost all the hypothesis that we need suggests the following strategy: to look for a modification $\tilde{h}$ of the generating function $h$, that satisfies properties (H1),(H2),(H3) and that coincide with $h$ for $t_1-t_0$ sufficiently large. An idea of how to do this is presented in \cite{matherforni}. 
Before stating the result, let us recall some basic facts on the wave equation. Consider the Cauchy problem, for $k\in\mathbb{R}$
\begin{equation}\label{onde}
\begin{sistema}
u_{tt}-u_{xx}=f(t,x)\\
u(\frac{k}{2},x)=\phi(x)\\
u_t(\frac{k}{2},x)=\psi(x)
\end{sistema}
\end{equation}
It is well known that if $f\in C^1(\mathbb{R}^2)$, $\phi\in C^2(\mathbb{R})$ and $\psi\in C^1(\mathbb{R})$ then there exists a solution $u\in C^2(\mathbb{R}^2)$. Moreover, with reference to figure 1, for every $(t_*,x_*)$  
let $\Delta$ be the characteristic triangle defined by the lines $r_1: x-x_*=t-t_*$, $r_2: x-x_*=t_*-t$, $r_3:t=\frac{k}{2}$ and let $J$ be the segment on $r_3$ defined by $r_1$ and $r_2$. Now, if $f_1=f_2$ on $\Delta$, $\phi_1=\phi_2$ on $J$ and $\psi_1=\psi_2$ on $J$ then we have that $u_1(t_*,x_*)=u_2(t_*,x_*)$, where $u_1$ and $u_2$ are the corresponding solutions. Finally, if the periodic conditions $f(t,x+1)=f(t,x)$, $\phi(x+1)=\phi(x)$ and $\psi(x+1)=\psi(x)$ are imposed then we have that the solution satisfies $u(t,x+1)=u(t,x)$.
\begin{prop}\label{modif}
Consider a function $h:\mathbb{R}^2\rightarrow\mathbb{R}$, $h\in C^3(\mathbb{R}^2)$, satisfying property (H2) and such that $\partial_{t_0t_1}h\leq\epsilon<0$ for $t_1-t_0$ sufficiently large. Suppose 
that 
$\partial_{t_0t_1t_1}h$ and $\partial_{t_0t_0t_1}h$ are bounded. 
Then there exists a function $\tilde{h}:\mathbb{R}^2\rightarrow\mathbb{R}$ that satisfies property (H1),(H2),(H3) and such that it coincides with $h$ for $t_1-t_0$ sufficiently large.
\end{prop}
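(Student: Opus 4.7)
The idea is to prescribe $\partial_{t_0 t_1}\tilde h = p$ for a suitable cut-off $p$ of $\partial_{t_0 t_1} h$, and then recover $\tilde h$ by viewing the relation as a wave equation and appealing to the D'Alembert machinery recalled just before the statement.

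Fix $k_0 < k_1$ such that $\partial_{t_0 t_1} h \leq \epsilon$ on $\{t_1 - t_0 \geq k_0\}$, and let $\chi \in C^\infty(\mathbb{R};[0,1])$ satisfy $\chi(s) = 0$ for $s \leq k_0$ and $\chi(s) = 1$ for $s \geq k_1$. Define
$$
p(t_0, t_1) := \chi(t_1 - t_0)\,\partial_{t_0 t_1} h(t_0, t_1) + \bigl(1 - \chi(t_1 - t_0)\bigr)\,\epsilon.
$$
Wherever $\chi > 0$ one has $\partial_{t_0 t_1} h \leq \epsilon$, so $p \leq \epsilon$ everywhere by convex combination. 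Since $\chi$ depends only on $t_1 - t_0$ and $\partial_{t_0 t_1} h$ is diagonally $1$-periodic, $p$ inherits the diagonal $1$-periodicity, and $p \in C^1(\mathbb{R}^2)$ because $h \in C^3$. The assumed bounds on $\partial_{t_0 t_0 t_1} h$ and $\partial_{t_0 t_1 t_1} h$ ensure that $p$ has globally bounded first derivatives, which is the quantitative input needed for D'Alembert's formula.

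Now apply the change of variables $t_0 = x - t$, $t_1 = x + t$, under which $\partial_{t_0 t_1} = -\tfrac{1}{4}(\partial_{tt}-\partial_{xx})$ and $\{t_1 - t_0 \geq k_1\}$ becomes $\{t \geq k_1/2\}$. Consider (\ref{onde}) with $k = k_1$, forcing
$$
f(t, x) := -4\,p(x - t,\, x + t),
$$
and initial data
$$
\phi(x) := h\!\left(x - \tfrac{k_1}{2},\, x + \tfrac{k_1}{2}\right), \qquad \psi(x) := \bigl(\partial_{t_1} h - \partial_{t_0} h\bigr)\!\left(x - \tfrac{k_1}{2},\, x + \tfrac{k_1}{2}\right).
$$
The regularity of $h$ yields $f \in C^1$, $\phi \in C^2$, $\psi \in C^1$, while (H2) for $h$ gives $1$-periodicity of $f$, $\phi$, $\psi$ in $x$. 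The existence result recalled just before the statement then produces a unique $u \in C^2(\mathbb{R}^2)$ with $u(t, x+1) = u(t, x)$. Setting $\tilde h(t_0, t_1) := u\!\left(\tfrac{t_1 - t_0}{2},\, \tfrac{t_0 + t_1}{2}\right)$ gives a $C^2$ function that is diagonally $1$-periodic and satisfies $\partial_{t_0 t_1}\tilde h = p \leq \epsilon$, proving (H1), (H2), (H3).

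It remains to show $\tilde h = h$ for $t_1 - t_0 \geq k_1$. On this set $p = \partial_{t_0 t_1} h$, hence $v(t, x) := h(x - t, x + t)$ solves the same wave equation as $u$ with the same $\phi, \psi$ on $t = k_1/2$. For any $(t_*, x_*)$ with $t_* \geq k_1/2$, the characteristic triangle described before the proposition lies in $\{t \geq k_1/2\}$, so the uniqueness statement recalled there gives $u(t_*, x_*) = v(t_*, x_*)$. The one technical point that requires care is the construction of $p$: one needs simultaneously $p \leq \epsilon$ everywhere and agreement with $\partial_{t_0 t_1} h$ far from the diagonal, which forces the introduction of the intermediate threshold $k_0$ so that the cut-off transitions through a zone where the sign condition on $\partial_{t_0 t_1} h$ already holds --- this is the only place where something beyond bookkeeping happens.
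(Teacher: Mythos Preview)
Your proof is correct and follows the same overall architecture as the paper (prescribe a modified mixed second derivative, then integrate it back via the D'Alembert/wave-equation formula with Cauchy data taken from $h$ on a line $t_1-t_0=\mathrm{const}$, and conclude by finite-speed-of-propagation uniqueness on characteristic triangles). The genuine difference lies in how the replacement for $\partial_{t_0t_1}h$ is built.

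The paper defines $\tilde d$ piecewise: it keeps $d:=\partial_{t_0t_1}h$ for $t_1-t_0\geq k$ and replaces it below by an explicit function $D$ that is quadratic in $(t_1-t_0-k)$, with coefficients chosen so that $D$ matches $d$ in $C^1$ along $t_1-t_0=k$ and stays below some $\tilde\epsilon<0$. It is precisely in bounding the linear coefficient $C=\tfrac12\|\partial_{t_1}d-\partial_{t_0}d\|_\infty$ that the hypotheses on $\partial_{t_0t_0t_1}h$ and $\partial_{t_0t_1t_1}h$ are used. Your route is cleaner: by inserting an auxiliary threshold $k_0<k_1$ and a smooth cutoff $\chi$, you make $p$ a pointwise convex combination of $\partial_{t_0t_1}h$ and $\epsilon$ whose transition zone lies entirely inside the region where $\partial_{t_0t_1}h\leq\epsilon$ already holds, so $p\leq\epsilon$ is automatic and $C^1$ regularity comes for free from $h\in C^3$. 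In fact your construction does not genuinely need the boundedness of the third derivatives; the sentence calling this ``the quantitative input needed for D'Alembert's formula'' is inaccurate, since the existence statement you invoke only requires $f\in C^1$, $\phi\in C^2$, $\psi\in C^1$, all of which you have from $h\in C^3$ alone. Both arguments finish identically by solving the Cauchy problem on the chosen line and using uniqueness on the characteristic triangles lying in $\{t_1-t_0\geq k_1\}$ to conclude $\tilde h=h$ there.
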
 
\begin{proof}
Let us start performing the change of variables $t_0=x-t, t_1=x+t$ in (\ref{onde}). We get
\begin{equation}
\begin{sistema}
u_{t_0t_1}=\tilde{f}(t_0,t_1)\\
u(t_0,t_0+k)=\tilde{\phi}(t_0)\\
(u_{t_1}-u_{t_0})(t_0,t_0+k)=\tilde{\psi}(t_0)
\end{sistema}
\end{equation}
where $\tilde{f}(t_0,t_1)=-\frac{1}{4}f(\frac{t_1-t_0}{2},\frac{t_1+t_0}{2})$, $\tilde{\phi}(t_0)=\phi(t_0+\frac{k}{2})$ and $\tilde{\psi}(t_0)=\psi(t_0+\frac{k}{2})$. In the new variables the characteristic triangle is defined, for every $(t_0^*,t_1^*)$, by the lines $\tilde{r}_1: t_0=t_0^*$, $\tilde{r}_2: t_1=t_1^*$, $\tilde{r}_3:t_1=t_0+k$. The change of variables does not affect the regularity so we have a solution $u\in C^2(\mathbb{R}^2)$ that, in an analogous sense as in (\ref{onde}), is unique on every characteristic triangle. Moreover, if $\tilde{f}(t_0,t_1)=\tilde{f}(t_0+1,t_1+1)$, $\tilde{\phi}(t_0)=\tilde{\phi}(t_0+1)$ and $\tilde{\psi}(t_0)=\tilde{\psi}(t_0+1)$ then, by the change of variable, we have that the solution satisfies $\tilde{u}(t_0,t_1)=\tilde{u}(t_0+1,t_1+1)$.
\begin{figure}[h]\label{caratt}
\begin{center}
\includegraphics[scale=0.6]{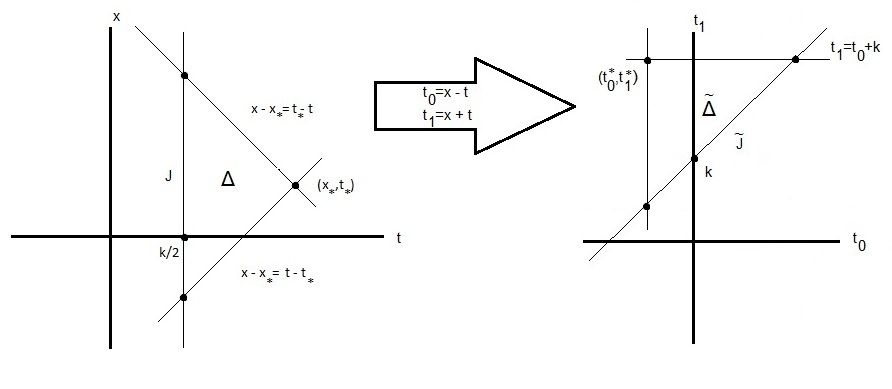}
\end{center}
 \caption{The characteristic triangle}
 \end{figure}  
\\To complete our preliminaries, consider, for any constant $A>0$, $C\in\mathbb{R}$, $x_0\in\mathbb{R}$, the function
$$
\phi(x)=\frac{A+C(x-x_0)}{(x-x_0)^2}
$$
and notice that $\phi(x)$ is bounded from below and
$$
\inf_{x<x_0}\phi(x)\leq 0.
$$ 
Now we can start. Let us call $d(t_0,t_1)=\partial_{t_1t_0}^2 h(t_0,t_1)$. 
By hypothesis we have that there exists $k>0$ sufficiently large and $\epsilon<0$ such that, if $t_1-t_0\geq k$ then
\begin{equation}\label{fre}
d(t_0,t_1)\leq\epsilon<0.
\end{equation}
Now choose $\tilde{\epsilon}$ such that $\epsilon<\tilde{\epsilon}<0$. In the definition of $\phi$ let $$A=\tilde{\epsilon}-\epsilon,\quad C=\frac{1}{2}||\partial_{t_1}d-\partial_{t_0}d||_\infty, \quad x_0=k.$$
Notice that $A$ is positive due to the choice of $\tilde{\epsilon}$ and that $C$ is finite due to the assumption on $\partial_{t_0t_1t_1}h$ and $\partial_{t_0t_0t_1}h$. So we can define $$I:=\inf_{x<k}\phi(x).$$
As we said we have $-\infty<I\leq 0$ so that we can fix $H$ such that $H<I$.
So let
\begin{equation}
\begin{split}
D(t_0,t_1)=&d(\frac{t_0+t_1-k}{2},\frac{t_0+t_1+k}{2})+\\ &\frac{1}{2}(\partial_{t_1}d(t_0,t_1)-\partial_{t_0}d(t_0,t_1))(t_1-t_0-k)+H(t_1-t_0-k)^2
\end{split}
\end{equation}
and define
\begin{equation}
\tilde{d}(t_0,t_1)=
\begin{sistema}
d(t_0,t_1) \quad\mbox{if } t_1-t_0\geq k\\
D (t_0,t_1) \quad\mbox{if } t_1-t_0< k
\end{sistema}
\end{equation}


It is easily seen that $\tilde{d}(t_0+1,t_1+1)=\tilde{d}(t_0,t_1)$. Moreover we claim that $\tilde{d}\in C^1(\mathbb{R}^2)$. Indeed clearly, it comes from the regularity of $h$ that every piece of the definition is $C^1$. Moreover it is also immediate that 
  $$
  d(t_0,t_0+k)=D (t_0,t_0+k),
  $$  
and a long but straight computation of the partial derivatives gives the requested regularity.
Now let us study how to satisfy property (H3).
We claim that,    
\begin{equation}\label{gre}
\tilde{d}(t_0,t_1)\leq \tilde{\epsilon}<0
\end{equation}
where $\tilde{\epsilon}$ comes from the definition of $H$.
Indeed, it is clear by hypothesis for $t_1-t_0\geq k$. For $t_1-t_0<k$, noticing that $\frac{t_0+t_1+k}{2}-\frac{t_0+t_1-k}{2}=k$ and remembering the definition of $H$ and the definition of $I$ as an infimum we have:
\begin{equation}
\begin{split}
&D(t_0,t_1)\leq \epsilon + C|t_1-t_0-k|+I(t_1-t_0-k)^2\leq \\
& \epsilon - C(t_1-t_0-k)+\frac{\tilde{\epsilon}-\epsilon+C(t_1-t_0-k)}{(t_1-t_0-k)^2}(t_1-t_0-k)^2=\tilde{\epsilon}<0
\end{split}
\end{equation} 

Now consider the Cauchy problem
\begin{equation}
\begin{sistema}
u_{t_0t_1}=\tilde{d}(t_0,t_1)\\
u(t_0,t_0+k)=h(t_0,t_0+k)=:\tilde{\phi}(t_0)\in C^2\\
(u_{t_1}-u_{t_0})(t_0,t_0+k)=(h_{t_1}-h_{t_0})(t_0,t_0+k)=:\tilde{\psi}(t_0)\in C^1.
\end{sistema}
\end{equation} 
By what we discussed in the preliminaries, we have a solution $\tilde{h}\in C^2(\mathbb{R}^2)$ that is unique on every characteristic triangle, such that $\tilde{h}(t_0,t_1)=\tilde{h}(t_0+1,t_1+1)$ and that, by construction, satisfies (H3). Finally, since $\tilde{d}=d$ if $t_1>t_0+k$, we have by uniqueness that $\tilde{h}=h$ if $t_1>t_0+k$, so that $\tilde{h}$ satisfies the thesis. 
\end{proof}

Now we are ready for the
\begin{proof}[Proof of theorem \ref{main}]
First of all notice that Proposition \ref{quasigenf} guarantees most of the hypothesis required by Proposition \ref{modif}. To verify the boundedness of $\partial_{t_0t_1t_1}h$ and $\partial_{t_0t_0t_1}h$ consider, first of all, in (\ref{genfun}), the term 
$$
\frac{(f(t_1)-f(t_0))^2}{2(t_1-t_0)}.
$$
A direct computation gives the boundedness of the third derivatives for $t_1-t_0$ large. If $t_1-t_0$ is small we just have to remember that
$$
\frac{(f(t_1)-f(t_0))^2}{(t_1-t_0)}=[\int ^1_0 \dot{f}(\lambda t_1+(1-\lambda)t_0)d\lambda ]^2(t_1-t_0)
$$
and perform a direct computation. The boundedness of the third derivatives is trivial for the other terms remembering the regularity and the periodicity of $f$. So we can apply Proposition \ref{modif} to (\ref{genfun}) to have a generating function $\tilde{h}(t_0,t_1)$ to which we can apply theorem \ref{mat}. Using the terminology of Mather \cite{matherams} we can find for every $\alpha\in\mathbb{R}$ a minimal configuration $t=(t^*_n)_{n\in\mathbb{Z}}$ with rotation number $\alpha(t)=\alpha$. Moreover, for this configuration we have that 
\begin{equation}\label{stat}
 \partial_2\tilde{h}(t_{n-1}^*,t^*_n)+\partial_1\tilde{h}(t^*_n,t^*_{n+1})=0
 \end{equation}
and
\begin{equation}\label{rot}
 |t^*_n-t^*_0-n\alpha(t)|<1.
 \end{equation}
Furthermore $\tilde{h}$ generates a diffeomorphism $\tilde{S}$ in the sense that 
 \begin{equation}
 \begin{sistema}
 \partial_1\tilde{h}(t_0,t_1)=-E_0 \\
 \partial_2\tilde{h}(t_0,t_1)=E_1
 \end{sistema}
\Leftrightarrow \tilde{S}(t_0,E_0)=(t_1,E_1), 
 \end{equation}
so, letting $E^*_n=\partial_2\tilde{h}(t^*_{n-1},t^*_n)=-\partial_1\tilde{h}(t^*_n,t^*_{n+1})$, we have that $(t^*_n,E^*_n)$ is a complete orbit of $\tilde{S}$: we call it minimal orbit. Yet, from (\ref{rot}) we have
 $$
 t^*_0+n\alpha(t)-1<t^*_n<t^*_0+n\alpha(t)+1
 $$
 and
 $$
 t^*_0+n\alpha(t)+\alpha(t)-1<t^*_{n+1}<t^*_0+n\alpha(t)+\alpha(t)+1
 $$
 from which we have
 \begin{equation}\label{rot2}
 \alpha(t)-2<t^*_{n+1}-t^*_n<\alpha(t)+2\quad\mbox{for every }n.
 \end{equation}
It  means that there exists $\alpha_*$ such that if $\alpha>\alpha_*$ we have that for every $n$ 
\begin{equation}\label{gran}
t^*_{n+1}-t^*_n>k.
\end{equation} 
where $k>0$ is a large positive constant such that $h(t_0,t_1)=\tilde{h}(t_0,t_1)$ if $t_1-t_0>k$.
Finally we claim that the orbit $(t^*_n,E^*_n)$ of $\tilde{S}$ is actually an orbit of $S$. Indeed we have, remembering (\ref{gran}), that
\begin{equation}
S(t^*_n,t^*_{n+1})=
\begin{sistema}
-\partial_1 h(t^*_n,t^*_{n+1}) \\
 \partial_2 h(t^*_n,t^*_{n+1})
\end{sistema}
=
\begin{sistema}
-\partial_1\tilde{h}(t^*_n,t^*_{n+1}) \\
 \partial_2\tilde{h}(t^*_n,t^*_{n+1})
\end{sistema}
=(E_n^*,E^*_{n+1}).
\end{equation}

\end{proof}

So, coming back to the physical problem of the bouncing ball 
\begin{cor}\label{cr}
There exists $\alpha_*$ such that for every $\alpha>\alpha_*$ then there exists a bouncing solution such that
	 $$
 \lim_{n\to\infty}\frac{1}{n}\sum_{k=0}^{n-1}(t_{k+1}^*-t_k^*)=\alpha.
 $$ 
 Moreover,
	 
\begin{itemize}
\item If $\alpha=p/q$ is rational then 
	\begin{itemize}
	\item there exists a $p$-periodic bouncing solutions of (\ref{prob}) with $q$ bounces in a period,
	\item if there exist two different periodic solutions $x^1(t)$ and $x^2(t)$, $x^1(t)\prec x^2(t)$ with the same rotation number $\alpha$ such that there is not another periodic solution $x^*(t)$ with the same rotation number such that $x^1(t)\prec x^*(t)\prec x^2(t)$ then there exist two different solutions $x^+(t)$ and $x^-(t)$ with rotation number $\alpha$ such that the corresponding sequences of impact times satisfy
	$$
	\lim_{i\to-\infty}|t^+_i-t^1_i|=0\quad\mbox{and}\quad\lim_{i\to\infty}|t^+_i-t^2_i|=0
	$$
	and
	$$
	\lim_{i\to-\infty}|t^-_i-t^2_i|=0\quad\mbox{and}\quad\lim_{i\to\infty}|t^-_i-t^1_i|=0
	$$ 
	\end{itemize}

\item If $\alpha$ is irrational then the sequence of impact times $t_i^*$ of the solution $x^*(t)$ with rotation number $\alpha$ is such that the set $\{t_i^*+\mathbb{Z},i\in\mathbb{Z}\}$ is either dense in $\mathbb{R}/\mathbb{Z}$ or a Cantor set in $\mathbb{R}/\mathbb{Z}$. 
	\end{itemize}
\end{cor}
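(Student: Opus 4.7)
The plan is to read the corollary off Theorem \ref{main} by translating the additional output of Aubry-Mather theory (as packaged in \cite{bangert,matherams}) into the language of bouncing solutions of (\ref{prob}). The proof of Theorem \ref{main} already produces, for every $\alpha>\alpha_*$, a minimal configuration $(t_n^*)$ of rotation number $\alpha$ whose consecutive gaps exceed $k$ and which satisfies (\ref{stat}) with the true $h$ in place of $\tilde{h}$. To recover a bouncing solution of (\ref{prob}) I would, as in \cite{kunzeortega2}, solve on each interval $[t_n^*,t_{n+1}^*]$ the Dirichlet problem $\ddot{x}=-(g+\ddot{f}(t))$ with $x(t_n^*)=x(t_{n+1}^*)=0$; the lower bound $t_{n+1}^*-t_n^*>k$ forces the solution to be strictly positive on the open interval, and the stationarity condition (\ref{stat}) translated through (\ref{genfunz}) is exactly the elastic bounce identity $\dot{y}(t_n^{*+})=-\dot{y}(t_n^{*-})$. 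The mean time between bounces is then literally the rotation number $\alpha$, which proves the first assertion.

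For the rational case $\alpha=p/q$, I would invoke the standard fact that among the minimal configurations of rotation number $p/q$ produced by Theorem \ref{mat} there are periodic ones, i.e.\ sequences satisfying $t_{n+q}^*=t_n^*+p$ for all $n$. The gluing described above turns such a configuration into a bouncing solution with $q$ bounces on a time interval of length $p$; combining this with the $1$-periodicity of $f$ yields $x(t+p)=x(t)$, that is, a $p$-periodic bouncing motion with $q$ bounces per period. For the heteroclinic statement, I would quote Mather's connecting-orbit theorem for adjacent Birkhoff minimizers: given two minimal periodic configurations of rotation number $p/q$ that are neighbors in the componentwise order, there exist two further minimal configurations asymptotic to them as $n\to\pm\infty$ in the crossed pattern described in the corollary. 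Since the order $\prec$ on bouncing solutions is by definition the componentwise order on their impact-time sequences, the neighbor hypothesis in the statement coincides with Mather's, and the asymptotic conclusion transfers verbatim to the corresponding solutions $x^+$ and $x^-$.

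For irrational $\alpha$, I would invoke the Aubry-Mather projection theorem: the projection of the Aubry-Mather set onto $\mathbb{R}/\mathbb{Z}$ is either all of $\mathbb{R}/\mathbb{Z}$ (the invariant-curve alternative) or a Cantor set (the Denjoy-type alternative). Applied to the projected sequence $\{t_i^*+\mathbb{Z} : i\in\mathbb{Z}\}$, this is precisely the stated dichotomy, and as emphasized in the introduction, both cases actually occur, so neither can be discarded a priori.

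The main effort therefore lies not in producing new dynamics but in verifying the discrete-continuous dictionary: that the Dirichlet solution is positive on the open interval (guaranteed by the choice of $k$ exactly as in \cite{kunzeortega2}), that the order $\prec$ on bouncing solutions corresponds componentwise to the order on minimal configurations so that Mather's neighbor hypothesis really coincides with the one in the corollary, and that the ``$p$-periodic motion with $q$ bounces per period'' reading of the periodic minimal configurations is consistent with the $1$-periodicity of $f$. These are all routine bookkeeping once the output of Theorem \ref{main} is in hand.
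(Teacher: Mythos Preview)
Your proposal is correct and follows essentially the same approach as the paper: both derive the bouncing solution from the minimal configuration via the stationarity condition (\ref{stat}) (which encodes the elastic bounce), and both obtain the structural statements about periodic, heteroclinic, and Cantor/dense alternatives by direct appeal to the general Aubry-Mather theory. Your write-up is in fact more explicit than the paper's own proof, which simply says the thesis ``follows directly from the general Aubry-Mather theory'' after noting that periodic minimal orbits satisfy $t^*_{n+q}-t^*_n=p$ and $E^*_{n+q}=E^*_n$.
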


 \begin{proof}
 Condition (\ref{stat}) is the one that guaranties the condition of elastic bouncing. So we have that to every minimal orbit of $S$ with rotation number $\alpha>\alpha_*$ corresponds a bouncing solution of problem (\ref{prob}) such that
 $$
 \lim_{n\to\infty}\frac{1}{n}\sum_{k=0}^{n-1}(t^*_{k+1}-t^*_k)=\alpha
 $$   
where $t^*_n$ represent the time of the $n$-th bounce on the racket. The thesis follows directly from the general Aubry-Mather theory, remembering that in the case of a rational rotation number the periodic minimal orbits $(t^*_n,E^*_n)$ for $S$ are such that
	\begin{equation}
	\begin{sistema}
	t^*_{n+q}-t^*_n=p\\
	E^*_{n+q}=E^*_n
	\end{sistema}
	\end{equation}  
 \end{proof}
 
 \begin{remark}
 Notice that we can interpret the rotation number of a bouncing solution as an average of the distance between two consecutive impact times.
 \end{remark}

\noindent \textbf{Acknowledgements.} I wish to thank Professor Rafael Ortega for having introduced me to this topic and for his constant supervision.

\bibliographystyle{plain}
\bibliography{biblio4}

\end{document}